\newtheorem{theorem}{Theorem}
\newtheorem{proposition}[theorem]{Proposition}
\theoremstyle{definition}
\newtheorem{remark}[theorem]{Remark}
\newtheorem{example}[theorem]{Example}
\renewcommand{\frak}{\mathfrak}
\title[Left-invariant nearly pseudo-K\"{a}hler$\dots$]{Left-invariant nearly pseudo-K\"{a}hler structures and the tangent Lie group}
\author{David N Pham}
\address{Queensborough C. College City University of New York}
\curraddr{}
\email{dnpham@qcc.cuny.edu}
\subjclass[2020]{Primary 32Q60, 53C15, 53C50}
\keywords{nearly pseudo-K\"{a}hler geometry, almost complex geometry, Lie groups}
\date{}
\begin{document}
\maketitle

\begin{abstract}
    Let $G$ be a Lie group and let $(g,J)$ be a left-invariant almost pseudo-Hermitian structure on $G$. It is shown that if $(g,J)$ is also nearly pseudo-K\"{a}hler, then the tangent bundle $TG$ (with its natural Lie group structure induced from $G$) admits a left-invariant nearly pseudo-K\"{a}hler structure.  
\end{abstract}
$~$\\
Let $(M,g,J)$ be an almost Hermitian manifold and let $\nabla$ be the Levi-Civita connection associated to the metric $g$.  It is well known that $(M,g,J)$ is K\"{a}hler if and only if $\nabla J\equiv 0$.  If, instead, $(g,J)$ satisfies the weaker condition $(\nabla_XJ)X=0$ for all vector fields $X$ on $M$, then $(M,g,J)$ is called a \textit{nearly K\"{a}hler} manifold.  The most celebrated example of a nearly K\"{a}hler manifold is the $6$-sphere $S^6$, where the almost complex structure is the one derived from the octonions and the metric is just the standard round metric (cf \cite{ABF2017} and the references therein).  If one drops the condition that $g$ be positive definite and simply require $g$ to be nondegenerate (in other words, $g$ is pseudo-Riemannian), $(M,g,J)$ is called an almost complex \textit{pseudo-Hermitian} manifold.  Moreover, if $(M,g,J)$ also satisfies $(\nabla_XJ)X=0$ for all vector fields $X$, then $(M,g,J)$ is called a \textit{nearly pseudo-K\"{a}hler manifold}. In this brief note, we prove the following result:
\begin{theorem}
\label{MainTheorem}
Let $G$ be a Lie group and let $(g,J)$ be a left-invariant almost pseudo-Hermitian structure on $G$.  If $(G,g,J)$ is also nearly pseudo-K\"{a}hler, then the tangent bundle $TG$ (equipped with its natural Lie group structure induced from $G$) admits a left-invariant nearly pseudo-K\"{a}hler structure.  Moreover, if the almost complex structure $J$ on $G$ is non-integrable, then so is the induced almost complex structure on $TG$. Hence, if $(G,g,J)$ is strictly nearly pseudo-K\"{a}hler, then so is the induced structure on $TG$.
\end{theorem}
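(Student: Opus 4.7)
My plan is to translate everything to the Lie algebra level, where for left invariant structures the nearly pseudo-K\"ahler condition becomes a bilinear identity. Via left trivialization $TG\cong G\ltimes\mathfrak g$ as a Lie group, so its Lie algebra is $T\mathfrak g=\mathfrak g\oplus\mathfrak g$ with the bracket
\[
\bigl[(X_1,Y_1),(X_2,Y_2)\bigr]=\bigl([X_1,X_2],\,[X_1,Y_2]+[Y_1,X_2]\bigr).
\]
I would then declare the candidate structure on $T\mathfrak g$ by
\[
\widetilde J(X,Y):=(JX,JY),\qquad \widetilde\mu\bigl((X_1,Y_1),(X_2,Y_2)\bigr):=\mu(X_1,Y_2)+\mu(Y_1,X_2),
\]
i.e., $\widetilde\mu$ is the $\epsilon$-coefficient of $\mu$ extended to $\mathfrak g\otimes\mathbb R[\epsilon]/(\epsilon^2)$. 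A short check establishes that $\widetilde\mu$ is nondegenerate of neutral signature (both copies of $\mathfrak g$ are totally isotropic) and that $\widetilde J$ is $\widetilde\mu$-orthogonal; these follow directly from the definitions together with $\mu(JX,JY)=\mu(X,Y)$.

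The heart of the argument is the identification of the Koszul bilinear map of $(\widetilde\mu,[\,,]_{T\mathfrak g})$. Writing $U\colon\mathfrak g\times\mathfrak g\to\mathfrak g$ for the Koszul map of $(\mu,[\,,])$, I expect to show by expanding the three-cyclic Koszul expression on $T\mathfrak g$ against a test vector $(X_3,Y_3)$ that the corresponding map $\widetilde U$ on $T\mathfrak g$ is
\[
\widetilde U\bigl((X_1,Y_1),(X_2,Y_2)\bigr)=\bigl(U(X_1,X_2),\,U(X_1,Y_2)+U(Y_1,X_2)\bigr).
\]
The verification is mechanical: the nine terms produced by Koszul on $T\mathfrak g$ sort naturally into three packets according to which of $Y_1,Y_2,Y_3$ they involve, and each packet reassembles as $2\mu(U(\cdot,\cdot),\cdot)$ for the appropriate pair on $\mathfrak g$. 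This bookkeeping is the main technical obstacle, since a sign slip anywhere will corrupt the identification.

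Granted this formula, the nearly pseudo-K\"ahler condition on $TG$ is $\widetilde U(u,\widetilde Ju)=\widetilde J\widetilde U(u,u)$ for $u=(X,Y)\in T\mathfrak g$. The first component is exactly $U(X,JX)=JU(X,X)$, which is the hypothesis on $G$. The second component demands
\[
U(X,JY)+U(Y,JX)=JU(X,Y)+JU(Y,X),
\]
which I would derive as the $X\leftrightarrow Y$ polarization of the hypothesis: replace $X$ by $X+Y$ in $U(X,JX)=JU(X,X)$, expand bilinearly, and cancel the pure-$X$ and pure-$Y$ terms. This is the conceptual place where the off-diagonal pairing used in $\widetilde\mu$ does its work.

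For the non-integrability statement, I would expand the Nijenhuis tensor of $\widetilde J$ using the bracket on $T\mathfrak g$ and check that the four constituent pieces group exactly as they do on $G$, giving
\[
N_{\widetilde J}\bigl((X_1,Y_1),(X_2,Y_2)\bigr)=\bigl(N_J(X_1,X_2),\,N_J(X_1,Y_2)+N_J(Y_1,X_2)\bigr).
\]
Specializing to $Y_1=Y_2=0$ shows that $N_{\widetilde J}\equiv 0$ forces $N_J\equiv 0$, which is the contrapositive of the claim. The strict nearly pseudo-K\"ahler conclusion is then immediate from the two preceding statements.
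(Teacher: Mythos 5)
Your proposal is correct, and it reaches the conclusion by a genuinely different route from the paper. The lifted structure itself is the same in both: your $\widetilde J(X,Y)=(JX,JY)$ and $\widetilde\mu\bigl((X_1,Y_1),(X_2,Y_2)\bigr)=\mu(X_1,Y_2)+\mu(Y_1,X_2)$ coincide with the paper's $\widehat JX^c=(JX)^c$, $\widehat JX^v=(JX)^v$ and the pairing $\widehat\mu(X^c,Y^v)=\mu(X,Y)$ with both lifts isotropic, and the compatibility, nondegeneracy, and Nijenhuis computations agree (in particular your formula $N_{\widetilde J}=\bigl(N_J(X_1,X_2),\,N_J(X_1,Y_2)+N_J(Y_1,X_2)\bigr)$ matches the paper's equations for $N_{\widehat J}$ on lifts). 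The divergence is in how the nearly pseudo-K\"ahler condition is verified. The paper first proves an auxiliary reformulation (Proposition \ref{NKCondition}), namely that nearly pseudo-K\"ahler is equivalent to $\omega(N_J(X,Y),X)+d\omega(X,JY,JX)=0$, and then checks this on $TG$ by computing $N_{\widehat J}$ and $d\widehat\omega$ on all combinations of complete and vertical lifts and running through eight cases, using the skew-symmetry of $\psi(X,Y,Z)=\omega(N_J(X,Y),Z)+d\omega(X,JY,JZ)$ in its outer arguments. You instead compute the Levi-Civita connection of $\widetilde\mu$ directly at the Lie algebra level and show that the Koszul bilinear map of $T\mathfrak{g}$ is the dual-number lift $\widetilde U\bigl((X_1,Y_1),(X_2,Y_2)\bigr)=\bigl(U(X_1,X_2),\,U(X_1,Y_2)+U(Y_1,X_2)\bigr)$ of the one on $\mathfrak{g}$; I have checked this identification and it is correct, since the nine Koszul terms regroup into three copies of the Koszul formula on $\mathfrak{g}$ according to which of $Y_1,Y_2,Y_3$ they contain. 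The condition $(\widetilde\nabla_u\widetilde J)u=0$ then reduces componentwise to the hypothesis $U(X,JX)=JU(X,X)$ and its polarization in $X$ and $Y$. Your route buys a stronger intermediate fact (the entire Levi-Civita connection lifts functorially, not merely the nearly K\"ahler tensor) and dispenses with the auxiliary proposition, while the paper's route never computes $\widetilde\nabla$ explicitly, at the cost of the $\omega$--$N_J$ reformulation and the case bookkeeping. One small point you should make explicit: verifying $(\widetilde\nabla_u\widetilde J)u=0$ only for left invariant $u$ suffices because the polarized expression $(\nabla_AJ)B+(\nabla_BJ)A$ is $C^\infty$-linear in both arguments, so vanishing on a left invariant frame gives vanishing for all vector fields; the paper relies on the same standard reduction implicitly.
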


Of course, the primary upshot of Theorem \ref{MainTheorem} is that by starting with a Lie group $G$ with an existing left-invariant (strictly) nearly pseudo-K\"{a}hler structure, one obtains left-invariant (strictly) nearly pseudo-K\"{a}hler structures on the tangent Lie groups $T^kG:=T(T^{k-1}G)$ where $T^0G:=G$ for all $k\ge 1$. The problem of constructing a left-invariant nearly pseudo-K\"{a}hler structure on a given Lie group by direct calculation is algebraically daunting. Consequently, having a means of generating Lie groups with left-invariant nearly pseudo-K\"{a}hler structures is certainly advantageous.  This is the motivation for Theorem \ref{MainTheorem}.    

Since we are constructing left-invariant structures, we work entirely at the Lie algebra level.  For a Lie group $G$, we set $\frak{g}:=T_eG$ where $e$ denotes the identity element of $G$ and we identify $X\in \frak{g}$ with the left-invariant vector field whose value at $e$ is $X$ for the Lie algebra structure on $\frak{g}$.  We recall the following example from \cite{SSH2010, GV2020}:
\begin{example}
Let $SL(2,\mathbb{R})$ denote the Lie group of  $2\times 2$ real matrices with determinant $1$. As a vector space, its Lie algebra  $\frak{sl}(2,\mathbb{R})$ consists of all $2\times 2$ real traceless matrices.  A basis for $\frak{sl}(2,\mathbb{R})$ is then 
$$
X_1:=\left(\begin{array}{cc}
1 & 0\\
0 & -1
\end{array}
\right),\hspace*{0.1in} X_2:=\left(\begin{array}{cc}
0 & 1\\
1 & 0
\end{array}
\right),\hspace*{0.1in} X_3:=\left(\begin{array}{cc}
0 & 1\\
-1 & 0
\end{array}
\right).
$$
The nonzero commutator relations are 
$$
[X_1,X_2] = 2X_3,\hspace*{0.1in} [X_1,X_3]=2X_2,\hspace*{0.1in} [X_2,X_3]=-2X_1.
$$
Consider the product Lie group $SL(2,\mathbb{R})\times SL(2,\mathbb{R})$.  Its Lie algebra is the direct sum Lie algebra $\frak{sl}(2,\mathbb{R})\oplus \frak{sl}(2,\mathbb{R})$. Let 
$$
E_i=(X_i,0),\hspace*{0.1in} F_i=(0,X_i),\hspace*{0.1in} \mbox{for }i=1,2,3.
$$
A basis for the Lie algebra $\frak{sl}(2,\mathbb{R})\oplus \frak{sl}(2,\mathbb{R})$ is then 
$$
E_1,~E_2,~E_3,~F_1~,F_2,~F_3.
$$
Let $J: T(SL(2,\mathbb{R})\times SL(2,\mathbb{R}))\longrightarrow T(SL(2,\mathbb{R})\times SL(2,\mathbb{R}))$ be the unique left-invariant bundle map defined by the conditions:
$$
JE_i=\frac{1}{\sqrt{3}}(2F_i+E_i),\hspace*{0.1in} JF_i=-\frac{1}{\sqrt{3}}(2E_i+F_i).
$$
One verifies that $J^2=-id$.  Also, by calculating the Nijenhuis tensor for $J$, one sees that $J$ is also non-integrable.  Let $g$ be the unique left-invariant pseudo-Riemannian metric on $SL(2,\mathbb{R})\times SL(2,\mathbb{R})$ defined by the conditions
$$
g(E_i,F_j)=\left\{\begin{array}{ll}
-\frac{1}{3}\delta_{ij} & \mbox{for }i=1,2\\
\frac{1}{3}\delta_{ij}& \mbox{for }i=3
\end{array}\right.
$$
$$
g(E_i,E_j)=g(F_i,F_j)=\left\{\begin{array}{ll}
\frac{2}{3}\delta_{ij} & \mbox{for }i=1,2\\
-\frac{2}{3}\delta_{ij}& \mbox{for }i=3
\end{array}\right.
$$
where $\delta_{ij}=1$ for $i=j$ and zero otherwise.  One verifies that $(g,J)$ is a left-invariant (strictly) nearly pseudo-K\"{a}hler structure on $SL(2,\mathbb{R})\times SL(2,\mathbb{R})$.
\end{example}
We now take a moment to recall the natural Lie group structure on the tangent bundle $TG$ for a Lie group $G$.  As a vector bundle, $TG$ is naturally isomorphic to the vector bundle $G\times \frak{g}\rightarrow G$ via the isomorphism
$$
\varphi: TG\longrightarrow G\times \frak{g},\hspace*{0.1in} T_g G\ni A \mapsto (g, (l_{g^{-1}})_\ast A)
$$
where $l_g: G\rightarrow G$ denotes left translation by $g\in G$ and $(l_g)_\ast$ is the associated pushforward map on the tangent space.  Using the above isomorphism, we identify $TG$ with the product $G\times \frak{g}$.  However, the Lie group structure on $TG$ induced by $G$ is \textit{not} the product Lie group structure.  For $(g,X),~(h,Y)\in TG$, the group product is given by 
$$
(g,X)\cdot (h,Y):=(gh, \mbox{Ad}_{h^{-1}}X+Y)
$$
with inverse 
$$
(g,X)^{-1}:=(g^{-1},-\mbox{Ad}_gX)
$$
where $\mbox{Ad}$ is the adjoint action of $G$ on $\frak{g}$ (see \cite{PY2022} for additional details). $TG$ with the above group structure is called the \textit{tangent Lie group} associated to $G$. As a Lie group, $TG$ is isomorphic to the semi-direct product $G\ltimes \frak{g}$ where $G$ acts on $\frak{g}$ from the \textit{right} by $X\cdot g:=\mbox{Ad}_{g^{-1}}X$ and $\frak{g}$ is regarded as an abelian Lie group.  As a vector space, the Lie algebra of $TG$ is naturally isomorphic to $\frak{g}\oplus \frak{g}$. Since $TG\simeq G\ltimes \frak{g}$, the Lie algebra structure is \textit{not} the direct sum Lie algebra structure.  For $X\in \frak{g}$, define  
$$
X^c:=(X,0),\hspace*{0.1in} X^v:=(0,X).
$$
One can verify that the bracket relations on the Lie algebra of $TG$ are given by
\begin{equation}
\label{LiftBracket}
 [X^c,Y^c]=[X,Y]^c,\hspace*{0.1in} [X^c,Y^v]=[X,Y]^v, \hspace*{0.1in} [X^v,Y^v]=0.   
\end{equation}
(See \cite{PY2022,Ph2019} for additional details).  One can show that the left-invariant vector fields associated to $X^c$ and $X^v$ on $TG$ are respectively the complete and vertical lifts of the left-invariant vector field associated to $X$ on $G$ (cf \cite{YI1973}).

For an almost pseudo-Hermitian manifold $(M,g,J)$, we recall that the associated 2-form $\omega\in \Omega^2(M)$ is given by
$$
\omega(\cdot,\cdot):=g(J\cdot,\cdot). 
$$
Note that since $J$ is compatible with $g$, $J$ is also compatible with $\omega$, that is, $\omega(J\cdot,J\cdot)=\omega(\cdot, \cdot)$.  In addition, we also recall the definition of the Nijenhuis tensor $N_J$ associated to an almost complex structure $J$:
$$
N_J(X,Y):=J[JX,Y]+J[X,JY]+[X,Y]-[JX,JY]
$$
where $X$ and $Y$ are vector fields on $M$.  By the Newlander-Nirenberg Theorem, $J$ is integrable precisely when $N_J\equiv 0$.

We will find it useful to express the nearly K\"{a}hler condition $(\nabla_XJ)X=0$ explicitly in terms of $\omega$ and $J$\footnote{One can also arrive at Proposition \ref{NKCondition} by recognizing that the nearly K\"{a}hler condition is equivalent to the condition $(\nabla_X\omega)(X,Y)=0$ for all vector fields $X,Y$ on $M$ and then applying equation (2.7) of Theorem 2.1 of \cite{Gray1966}.}: 
\begin{proposition}
\label{NKCondition}
Let $(M,g,J)$ be an almost pseudo-Hermitian manifold.  Then $(M,g,J)$ is nearly pseudo-K\"{a}hler if and only if 
$$
\omega(N_J(X,Y),X)+d\omega(X,JY,JX)=0
$$
for all vector fields $X$, $Y$ on $M$.
\end{proposition}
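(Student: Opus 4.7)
The plan is to convert the nearly pseudo-K\"ahler condition on the almost complex structure $J$ into an equivalent condition on the fundamental $2$-form $\omega$, then invoke the classical identity of Gray \cite{Gray1966} that expresses $\nabla\omega$ in terms of $d\omega$ and the Nijenhuis tensor $N_J$, and finally specialize that identity to the relevant slot configuration.

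First, since the Levi-Civita connection $\nabla$ is metric-compatible, the definition $\omega(Y,Z)=g(JY,Z)$ leads, by a routine computation, to the identity
$$(\nabla_X\omega)(Y,Z) \;=\; g\bigl((\nabla_XJ)Y,\,Z\bigr).$$
Taking $Y=X$ and letting $Z$ vary over all vector fields, the nearly pseudo-K\"ahler condition $(\nabla_XJ)X=0$ for every $X$ is therefore equivalent to the condition $(\nabla_X\omega)(X,Y)=0$ for every pair of vector fields $X,Y$. This reduces the proposition to a purely $\omega$-theoretic statement.

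Second, I would apply the formula from equation (2.7) of Theorem 2.1 of \cite{Gray1966} (valid on any almost pseudo-Hermitian manifold), which expresses $2(\nabla_X\omega)(Y,Z)$ as a linear combination of values of $d\omega$ on appropriate twists of $X,Y,Z$ by $J$ together with a term of the form $g(N_J(Y,Z),JX)$. Setting $Y=X$ collapses several of the $d\omega$ terms by the antisymmetry of $d\omega$ (in particular $d\omega(X,X,\cdot)=0$). After that, I would repackage the surviving $d\omega$ term using antisymmetry, e.g.\ $d\omega(X,JX,JY)=-d\omega(X,JY,JX)$, and convert $g(N_J(X,Y),JX)$ into $\omega(N_J(X,Y),X)$ using the $g$-skew adjointness of $J$. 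What should remain is exactly the identity
$$2(\nabla_X\omega)(X,Y) \;=\; \omega\bigl(N_J(X,Y),X\bigr)+d\omega(X,JY,JX),$$
whence, combined with the first step, the proposition follows.

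The main obstacle is purely notational: Gray's identity appears in the literature with several sign conventions, so the bookkeeping step needs to be done carefully, matching the conventions used here for $\omega$, $N_J$, and the sign of $J$-skew adjointness. Once that is pinned down, the remainder is mechanical and no new geometric input is required.
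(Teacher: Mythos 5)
Your argument is correct, but it is not the route the paper's own proof takes; it is instead exactly the alternative that the paper flags in its footnote to Proposition \ref{NKCondition}. The paper proves the identity from scratch: it expands $2g((\nabla_XJ)X,Y)$ via the Koszul formula, rewrites everything in terms of $\omega$ using $g(\cdot,\cdot)=\omega(\cdot,J\cdot)$ and the compatibility $\omega(J\cdot,J\cdot)=\omega(\cdot,\cdot)$, and then recognizes the Nijenhuis tensor and the coordinate-free formula for $d\omega$ inside the resulting expression. Your plan instead reduces the nearly pseudo-K\"ahler condition to $(\nabla_X\omega)(X,Y)=0$ (which is correct, since metric compatibility gives $(\nabla_X\omega)(Y,Z)=g((\nabla_XJ)Y,Z)$ and $g$ is nondegenerate) and then imports Gray's identity (2.7) as a black box, specializing $Y=X$ and using antisymmetry of $d\omega$. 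Both arrive at the same displayed identity $2(\nabla_X\omega)(X,Y)=\omega(N_J(X,Y),X)+d\omega(X,JY,JX)$. What your approach buys is brevity and a conceptual link to the classical literature; what it costs is that the entire burden of the proof is shifted onto matching conventions --- Gray's normalization of $d\omega$ (with or without the $1/3$ factor), his sign for $N_J$ (which is the negative of the one used here), and the sign in $g(JU,V)=-g(U,JV)$ --- and you correctly identify this as the one place where the argument could silently go wrong. The paper's Koszul computation is longer but self-contained and fixes all of these conventions internally, which is presumably why the author relegated your route to a footnote. To make your proof fully rigorous you would need to actually carry out that bookkeeping rather than assert that it ``should'' yield the stated identity, but there is no gap in the underlying idea.
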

\begin{proof}
The Levi-Civita connection associated to $g$ is uniquely defined by the Kozul formula:
\begin{align*}
    2g(\nabla_XY,Z)&=Xg(Y,Z)+Yg(Z,X)-Zg(X,Y)\\
    &+g([X,Y],Z)-g([X,Z],Y)-g([Y,Z],X).
\end{align*}
The condition $(\nabla_XJ)X=0$ for all vector fields $X$ on $M$ is equivalent to the condition that 
$$
g((\nabla_XJ)X,Y)=0
$$
for all vector fields $X$, $Y$ on $M$. Using the Kozul formula, we have
\begin{align*}
    2g((\nabla_XJ)X,Y)&=2g(\nabla_X(JX),Y)-2g(J\nabla_XX,Y)\\
    &=2g(\nabla_X(JX),Y)+2g(\nabla_XX,JY)\\
    &=Xg(JX,Y)+(JX)g(Y,X)-Yg(X,JX)\\
    &+g([X,JX],Y)-g([X,Y],JX)-g([JX,Y],X)\\
    &+2Xg(X,JY)-(JY)g(X,X)-2g([X,JY],X).
\end{align*}
From the definition of $\omega$, we have $g(\cdot,\cdot)=\omega(\cdot, J\cdot)$. Using this relation along with the compatibility of $J$ and $\omega$, the last equality can be expressed as 
\begin{align*}
    2g((\nabla_XJ)X,Y)&=-(JX)\omega(JY,X)+\omega([X,JX],JY)+\omega([X,Y],X)\\
    &+\omega(J[JX,Y],X)
    -X\omega(X,Y)-(JY)\omega(X,JX)\\
    &+\omega(J[X,JY],X)-\omega([X,JY],JX).
\end{align*}
Using the definition of the Nijenhuis tensor $N_J$ and the fact that 
\begin{align*}
    d\omega(X,Y,Z)&=X\omega(Y,Z)+Y\omega(Z,X)+Z\omega(X,Y)\\
&-\omega([X,Y],Z)-\omega([Y,Z],X)-\omega([Z,X],Y),
\end{align*}
the previous equality becomes
\begin{align*}
    2g((\nabla_XJ)X,Y)&=\omega(N_J(X,Y),X)+\omega([JX,JY],X)\\
    &-(JX)\omega(JY,X)+\omega([X,JX],JY)\\
    &-X\omega(X,Y)-(JY)\omega(X,JX)-\omega([X,JY],JX)\\
    &=\omega(N_J(X,Y),X)-\omega([JY,JX],X)\\
    &+(JX)\omega(X,JY)-\omega([JX,X],JY)\\
    &+X\omega(JY,JX)+(JY)\omega(JX,X)-\omega([X,JY],JX)\\
    &=\omega(N_J(X,Y),X)+d\omega(X,JY,JX),
\end{align*}
which completes the proof.
\end{proof}
\begin{remark}
From Proposition \ref{NKCondition}, one can deduce that if $(M,g,J)$ is nearly pseudo-K\"{a}hler with 2-form $\omega(\cdot,\cdot):=g(J\cdot, \cdot)$ closed, then $(M,g,J)$ must be pseudo-K\"{a}hler. Here, pseudo-K\"{a}hler means that $J$ is also integrable and $\omega$ is closed. Hence, a pseudo-K\"{a}hler manifold satisfies all the conditions of a K\"{a}hler manifold except the metric is not required to be positive definite.

In other words, an almost pseudo-Hermitian manifold $(M,g,J)$ which is both almost pseudo-K\"{a}hler (i.e. $\omega$ is closed, but $J$ is not necessarily integrable) and nearly pseudo-K\"{a}hler must be pseudo-K\"{a}hler.  This fact implies that  any nearly pseudo-K\"{a}hler manifold of dimension 2 or 4 must be pseudo-K\"{a}hler.  Hence, any strictly nearly pseudo-K\"{a}hler manifold must be of dimension $6$ or greater.
\end{remark}
\noindent We are now in a position to prove Theorem \ref{MainTheorem}:
\begin{proof}
Let $G$ be a Lie group with a left-invariant nearly pseudo-K\"{a}hler structure $(g,J)$.  Also, let $\mbox{Lie}(TG)$ denote the Lie algebra of the tangent Lie group $TG$.  Let $\widehat{J}: T(TG)\longrightarrow T(TG)$ be the left-invariant bundle map uniquely defined by the conditions:
$$
\widehat{J}X^c:= (JX)^c,\hspace*{0.2in} \widehat{J}X^v:=(JX)^v
$$
for all $X\in \frak{g}$.  It follows immediately from the definition that $\widehat{J}^{~2}=-id$.  Let $\widehat{g}$ be the left-invariant symmetric bilinear form on the tangent Lie group $TG$ which is uniquely defined by the conditions:
$$
\widehat{g}(X^c,Y^c)=\widehat{g}(X^v,Y^v)=0
$$
$$
\widehat{g}(X^c,Y^v):=g(X,Y)
$$
for all $X,Y\in \frak{g}$. From the definition, it immediately follows that 
$$
\widehat{g}(\widehat{J}X^c,\widehat{J}Y^c)=\widehat{g}((JX)^c,(JY)^c)=0=\widehat{g}(X^c,Y^c).
$$
Likewise, 
$$
\widehat{g}(\widehat{J}X^v,\widehat{J}Y^v)=0=\widehat{g}(X^v,Y^v).
$$
Also, 
\begin{align*}
    \widehat{g}(\widehat{J}X^c,\widehat{J}Y^v)=\widehat{g}((JX)^c,(JY)^v)=g(JX,JY)=g(X,Y)=\widehat{g}(X^c,Y^v).
\end{align*}
Hence, $\widehat{J}$ and $\widehat{g}$
are compatible.  For the nondegeneracy of $\widehat{g}$, let $\mathbf{A}$ be a nonzero element of $\mbox{Lie}(TG)$.  Then $\mathbf{A}=X^c+Y^v$ for some $X,Y\in \frak{g}$. If $X$ is nonzero, there exists a $W\in \frak{g}$ such that $g(X,W)\neq 0$.  Hence, 
$$
\widehat{g}(\mathbf{A},W^v)=\widehat{g}(X^c,W^v)=g(X,W)\neq 0.
$$
If $X=0$, then $Y\neq 0$ and by the nondegeneracy of $g$ there exists a $Z\in \frak{g}$ such that $g(Y,Z)\neq 0$.  Hence, 
$$
\widehat{g}(\mathbf{A},Z^c)=\widehat{g}(Y^v,Z^c)=g(Y,Z)\neq 0.
$$
This shows that $(\widehat{g}, \widehat{J})$ is a left-invariant pseudo-Hermitian structure on $TG$.

We now show that $(\widehat{g}, \widehat{J})$ is also nearly pseudo-K\"{a}hler.  For this, we first note that the definition of $\widehat{J}$ along with the bracket relations for complete and vertical lifts (\ref{LiftBracket}) gives the following:
\begin{equation}
    \label{N1}
    N_{\widehat{J}}(X^c,Y^c)=\left(N_J(X,Y)\right)^c
\end{equation}
\begin{equation}
    \label{N2}
    N_{\widehat{J}}(X^v,Y^v)=0
\end{equation}
\begin{equation}
    \label{N3}
    N_{\widehat{J}}(X^c,Y^v)=\left(N_J(X,Y)\right)^v
\end{equation}
for all $X,Y\in \frak{g}$.  Note that equations (\ref{N1}) and (\ref{N3}) imply that if $J$ is non-integrable, then so is $\widehat{J}$.

Let $\omega(\cdot, \cdot):=g(J\cdot, \cdot)$ and $\widehat{\omega}(\cdot, \cdot):=\widehat{g}(\widehat{J}\cdot, \cdot)$ be the 2-forms associated to $(g,J)$ and $(\widehat{g},\widehat{J})$ respectively.  Then 
\begin{equation}
    \label{w1}
    \widehat{\omega}(X^c,Y^c)=\widehat{\omega}(X^v,Y^v)=0
\end{equation}
\begin{equation}
    \label{w2}
    \widehat{\omega}(X^c,Y^v)=\omega(X,Y)
\end{equation}
for all $X,Y\in \frak{g}$.  From this, we have
\begin{equation}
    \label{dw1}
    d\widehat{\omega}(X^c,Y^c,Z^c)=d\widehat{\omega}(X^v,Y^v,Z^v)=d\widehat{\omega}(X^c,Y^v,Z^v)=0
\end{equation}
\begin{equation}
    \label{dw2}
    d\widehat{\omega}(X^c,Y^c,Z^v)=d\omega(X,Y,Z)
\end{equation}
for all $X,Y,Z\in \frak{g}$.  Now let 
$$
\psi(X,Y,Z):=\omega(N_J(X,Y),Z)+d\omega(X,JY,JZ)
$$
for $X,Y,Z\in \frak{g}$ and let
$$
\widehat{\psi}(\mathbf{A},\mathbf{B},\mathbf{C}):=\widehat{\omega}(N_{\widehat{J}}(\mathbf{A},\mathbf{B}),\mathbf{C})+d\widehat{\omega}(\mathbf{A},\widehat{J}\mathbf{B},\widehat{J}\mathbf{C})
$$
for $\mathbf{A},\mathbf{B},\mathbf{C}\in \mbox{Lie}(TG)$.  Using (\ref{N1})-(\ref{N3}) and (\ref{dw1})-(\ref{dw2}), we see that 
\begin{align}
    \label{psi1}
    \widehat{\psi}(X^c,Y^c,Z^c)&=0\\
    \label{psi2}
    \widehat{\psi}(X^c,Y^c,Z^v)&=\psi(X,Y,Z)\\
    \label{psi3}
    \widehat{\psi}(X^c,Y^v,Z^c)&=\psi(X,Y,Z)\\
    \label{psi4}
    \widehat{\psi}(X^c,Y^v,Z^v)&=0\\
      \label{psi5}
    \widehat{\psi}(X^v,Y^c,Z^c)&=\psi(X,Y,Z)\\
      \label{psi6}
    \widehat{\psi}(X^v,Y^c,Z^v)&=0\\
      \label{psi7}
    \widehat{\psi}(X^v,Y^v,Z^c)&=0\\
      \label{psi8}
    \widehat{\psi}(X^v,Y^v,Z^v)&=0
\end{align}
for $X,Y,Z\in \frak{g}$.  Since $(g,J)$ is nearly pseudo-K\"{a}hler, we have $\psi(X,Y,X)=0$ for all $X,Y\in \frak{g}$ by Proposition \ref{NKCondition}.  Of course, this condition is equivalent to the condition 
\begin{equation}
\label{psiSkew}
\psi(X,Y,Z)+\psi(Z,Y,X)=0
\end{equation}
for all $X,Y,Z\in \frak{g}$.  (\ref{psiSkew}) along with (\ref{psi1})-(\ref{psi8}) imply 
$$
\widehat{\psi}(\mathbf{A},\mathbf{B},\mathbf{C})+\widehat{\psi}(\mathbf{C},\mathbf{B},\mathbf{A})=0
$$
for all $\mathbf{A},\mathbf{B},\mathbf{C}\in \mbox{Lie}(TG)$ which in turn implies $\widehat{\psi}(\mathbf{A},\mathbf{B},\mathbf{A})=0$ for all $\mathbf{A},\mathbf{B}\in \mbox{Lie}(TG)$.  By Proposition \ref{NKCondition}, $(\widehat{g},\widehat{J})$ is a (left-invariant) nearly pseudo-K\"{a}hler structure on $TG$.  This completes the proof.
\end{proof}



\begin{thebibliography}{99}

\bibitem 
{ABF2017} I. Agricola, Aleksandra Bor'owka, T. Friedrich, {\em $S^6$ and the geometry of nearly K\"{a}hler $6$-manifolds}, Diff. Geom. App., vol. 57, April 2018, pp. 75-86.

\bibitem
{GV2020} E. Ghandour, L. Vrancken, {\em Almost Complex Surfaces in the Nearly K\"{a}hler $SL(2,\mathbb{R})\times SL(2,\mathbb{R})$},  Mathematics 2020, 8(7), 1160; https://doi.org/10.3390/math8071160.

\bibitem
{Gray1966} A. Gray, {\em Some Examples of Almost Hermitian Manifolds}, Illinois J. Math. 10(2): (1966), pp. 353-366. 

\bibitem
{Ph2019} D. Pham, {\em On the tangent Lie group of a symplectic Lie group}, Ricerche di Matematica, \textbf{68}, (2019) pp. 699-704. 

\bibitem
{PY2022} D. Pham, F. Ye, {\em Symplectic Lie groups and doubled geometry}, Acta Math. Univ. Comenianae, Vol. 91, no. 2 (2022), pp. 161-190.

\bibitem
{SSH2010} L. Sch\"{a}fer, F. Schulte-Hengesbach, {\em Nearly pseudo-K\"{a}hler and nearly para-K\"{a}hler six manifolds}, Handbook of Pseudo-Riemannian Geometry and Supersymmetry, European Mathematical Society (June 15, 2010). 

\bibitem 
{YI1973} K. Yano, S. Ishihara, {\em Tangent 
and Cotangent bundles}, Marcel Decker, Inc., New York, 1973.

\end{thebibliography}
\end{document}